 \newtheorem{theorem}{Theorem}[section]
 \newtheorem{lemma}[theorem]{Lemma}
 \newtheorem{remark}[theorem]{Remark}
\begin{document}

\title{\LARGE From Packet to Power Switching: Digital Direct Load Scheduling}

\author{Mahnoosh Alizadeh\IEEEauthorrefmark{1},~\IEEEmembership{Student Member,~IEEE},~Anna Scaglione\IEEEauthorrefmark{1},~\IEEEmembership{Fellow,~IEEE}
 	\\and~Robert J. Thomas\IEEEauthorrefmark{2},~\IEEEmembership{Life Fellow,~IEEE}
\thanks{\IEEEauthorrefmark{1}Department of Electrical and Computer Engineering,
      University of California Davis, email: \mbox{$\lbrace$malizadeh,ascaglione$\rbrace$@ucdavis.edu} ,
\IEEEauthorrefmark{2}Department of Electrical Engineering, Cornell University, email: \mbox{rjt1@cornell.edu}.
This work has been funded by DOE under CERTS. Parts of this work was presented at Smartgridcomm 2011.
}}


\markboth{Accepted by the IEEE journal of Selected Areas in Communications (JSAC): Smart Grid Communications series, to appear.}{Accepted to IEEE journal of Selected Areas in Communications (JSAC): Smart Grid Communications series, to appear.}  \maketitle
\begin{abstract}
At present, the power grid has tight control over its dispatchable generation
capacity but a very coarse control on the demand. Energy
consumers are shielded from making price-aware decisions, which degrades the efficiency of the market. This state of affairs tends to favor fossil fuel generation
over renewable sources. Because of the technological difficulties of storing electric energy, the quest for
mechanisms that would make the demand for electricity controllable on a day-to-day basis is gaining prominence. The goal of this paper is to provide one such
mechanisms, which we call Digital Direct Load Scheduling (DDLS). DDLS is a direct load control mechanism in which we unbundle individual requests for energy and digitize them so that they can be
automatically scheduled in a cellular architecture. Specifically, rather than storing energy or interrupting the job of appliances, we choose to hold
requests for energy in queues and optimize the service time of individual appliances
belonging to a broad class which we refer to as ``deferrable loads''. The function
of each neighborhood scheduler is to optimize the time at which these appliances start to function. This
process is intended to shape the aggregate load profile of the neighborhood so as to optimize an objective
function which incorporates the spot price of energy, and also allows distributed
energy resources to supply part of the generation dynamically.
\end{abstract}
\begin{keywords}
Demand side management, aggregator, load scheduling, electric vehicles, Smart Grid communications
\end{keywords}
\section{Introduction}\label{intro}

The most important rule of power systems is that demand and generation should be balanced at all times.  In order to fit into the secure operational framework of today's power grid, generators need to be able to forecast and control their output power for the next day and commit to generating the amount of electricity that they are scheduled to produce by the independent system operator (ISO). Failure to do so will result in system reliability issues; therefore, to ensure compliance, stiff penalties are imposed on those not able to meet their schedule. This favors the predictable and controllable power which can be generated from fossil fuels. With today's mostly price-inelastic demand, incorporating considerable amounts of renewable generation, given the volatility and unpredictability of such sources, is a challenge  \cite{eric}.

 One way to overcome the dispatch issues associated with renewable resources is through a cyber system in support of making energy demand both observable and controllable. One embodiment of the so-called Smart Grid is a model for \textit{smart} devices to become flexible agents, distributed across the grid, whose demand can be modified, either in a centralized or a distributed manner, via programs that produce market efficiency. These flexible agents could, among other things, modulate the demand to more easily integrate volatile renewable generation sources in the system, especially those available locally, at the edge of the distribution network. 
To help this remarkable transition, utility companies are introducing smart meters, whose deployment is referred to as the Advanced Metering Infrastructure (AMI) \cite{AMI}). The AMIs are currently used to improve the prediction accuracy of the load and curtail demand during emergencies. The question is {\it when, how and with what incentives} load control could be performed on a regular basis. 

\vspace{-.4cm}
\subsection{Previous Work}
Previous work on demand management techniques can be categorized into either price-based load control techniques, which fall under the class of Demand Response (DR) methods, or direct load control (typically through curtailment), which is classified as Demand Side Management (DSM).  

Price-based load control strategies include Time Of Use (TOU), Critical Peak Pricing (CPP), or Real-Time Pricing (RTP) techniques, all of which require customers to make energy usage decisions individually based on pricing signals. 
In TOU pricing strategies, the price data is usually delivered months or years before the actual time of use. There have been several studies on determining these rates, which requires a dynamic model for the price response of customers, typically derived based on experiments \cite{TOU1,TOU2}. TOU can ameliorate customer behavior and smoothen the daily demand profile, but has little chance of aiding the integration of volatile resources. In RTP, instead, price information is provided only hours before consumption. With RTP, there is a need for an automated system to help the customer make energy usage decisions; generally they are referred to as Home Energy Management Systems (HEMS) (see e.g., \cite{amirhamed,Han}). HEMS plan the customers consumption given pricing data, appliance power profiles, job deadlines, and user preferences.  RTP strategies are appealing since they correct inefficiencies with a decentralized market driven control, but raise some concerns in terms of safety. Real-time prices are not actually real-time since they have to be delivered to the customer beforehand, to allow some planning time. Also, the presence of a feedback loop due to customer response may result in rebound peaks that can worsen the situation \cite{kishore} and can also lead to physical and price instability \cite{mitter}. An important question is how to compute the optimum price for each end-use customer without introducing economic and physical safety issues. Can these prices be equal for every customer, or type of appliance, in a neighborhood, or should they change? Should they be updated continuously, or in an event-driven fashion? Determining pricing also requires extensive knowledge about customer behavior: on pre-determined rates, customers are much more predictable.

Demand Side Management strategies are, in contrast, built for safety. They are usually applied directly by a control center, and require customer subscription to an economic incentive program. The first techniques emerged in the 60's and are currently employed through the so-called \textit{Interruptible Load} programs, where some of the customers appliances, upon receiving a notice, are  automatically turned off for a pre-determined amount of time (15-30 minutes).  Several methods have been proposed to determine the best load curtailment strategy \cite{LC1,LC2}.  These schemes have the benefit of aggregating several load assets but  their pitfall is that it is hard to determine off-line the degree to which they inconvenience the customers. Thus, these programs are normally used only in emergency situations. 

In recent years, the deployment of advanced two-way communication links are leading to rethink \textit{centralized load control}. Rather than the utility, an aggregator could directly control end-use appliances and tap not only customers HVAC (heating, ventilation, and air conditioning) systems but also on their electric vehicle (EV) batteries \cite{maxzhang}. Next, we explain that EVs and HVAC have significant differences and, in our view, require separate treatment.  

\subsection{Classification of Loads and research on Deferrable Loads}
There are three main classes of appliances used by residential customers. A first class of automata is one whose load profile, once on, is predictable. This class includes, for example, EVs, dishwashers, washers/dryers etc. Given that their cycles last for a long time, their starting time can be easily shifted, but their operation should not be interrupted. In this paper we specifically target this first class of {\it deferrable loads}. A second class of loads has a predictable profile except for an unknown duration. This class includes lighting, television sets, stereos etc. The service these loads deliver is extremely time sensitive, and so is their electricity demand. Therefore, the demand due to this type is not very flexible (except for dimmable lights).  Last but not least, the third class of loads includes, for example, thermostatically controlled appliances. These loads can be both interrupted and turned on earlier, but their load profile and the time sensitivity of the service depend non-linearly on a local state. Most centralized load control programs interrupt this class of appliances. Here we argue that there is an additional layer of complexity in capturing digitally their service model, and therefore, to avoid incorrect generalization, we currently exclude them from our treatment.
  
Specifically, we will name as {\it D-loads} those deferrable loads which, if enabled by embedded logic and two way communications, can specify their energy request to a remote control center and wait to be dispatched.

The most important {\it D-loads} are Electric Vehicles (EV). Researchers have proposed algorithms that control the energy use of EVs under dynamic pricing \cite{poor,korea}. Examples of direct control are the work in \cite{caramanis}, where the charging rate of EVs drawing from a single feeder is oprimized by dividing them into service queues depending on their desired time of departure; the work in \cite{somayeh}, instead, optimizes the charging rate of EVs by including them in the optimal power flow problem solved by the system operator.  The inclusion of deferrable loads in the energy market in the presence of renewables is the subject of \cite{homer,oren1}. In this line of work, the demand response assets are modeled as an aggregate tank/storage capacity that needs to be filled by a certain deadline. The DSM model we propose in this paper can provide a more accurate description of the load modification capability of an aggregator, and for the information flows to support the service. 

\vspace{-0.1in}
\subsection{Contribution}
In this paper, we propose a direct load management scheme that mediates between the central control model of DLC and the {\it laissez faire} nature of RTP. 
The scheme we propose in this paper is a direct control architecture that can be implemented as a voluntary program in which customers release the control of the time at which their D-loads start functioning to their associated neighborhood controller or {\it aggregator}.  They turn over this control in return for a financial reward for the inconvenience they experience, as their energy use is manipulated to follow a desired demand profile closely. We envision that the scheduling decisions are made at several distributed control centers that aggregate energy requests from many customers and make energy usage decisions that are relayed back to them.  Anticipating that aggregating energy requests up to a certain level will give us the benefits of scale, our model allows us to explore to what degree cooperation among customers benefits the efficiency of the system. 
In fact, our scheme can be viewed as a model for building up {\it reservoirs of D-loads requests}, like a virtual water dam for loads, which can be dispatched optimally to follow the generation, thereby reversing the traditional roles played by generators and loads.

Our contribution is the introduction of a Digital Direct Load Scheduling (DDLS) model to manage D-loads and the network architecture that incorporates it. The model includes 1) an {\it encoding component} for controllable load, generation and for the feedback signals, 2) a {\it load scheduler} and 3) a {\it generation market interface}.  The encoding component allows deferrable appliances to become D-loads, which means that through the DDLS these appliances become capable of specifying their request for energy and release the control of their starting time to the scheduler, which determines their activation time in such a way to shape the load profile to follow available green generation. In this paper, we present the encoding component and the load scheduler aspects of our proposed model.  The proposed feedback mechanism informs the D-loads that need to be activated about the time they can start their job.  The description of how the proposed management system interacts with the electricity market and the extension of the DDLS to thermostatically controlled appliances is left as future research.

\section{The load model}\label{sec.model}\label{basic}

The load in the power grid is a mixture of random requests. In our model we separate the D-loads portion from the remaining part of the load. In fact, the load is:
\begin{equation}\label{loadeq}
L(t) = L^N(t) + L^{S}(t),
\end{equation} 
where $L^N(t)$ represents the base load, which we presumably have no control over, while $L^{S}(t)$ is the {\it controllable part of the load} due to the D-loads.  Here, we consider $L^N(t)$  predictable using standard load forecasting methods (e.g., \cite{m1,m2,m3}).

The next step is to further fragment $L^S(t)$ into contributions from individual appliances. In the model, D-loads arrive in the system randomly following a non-stationary  arrival process:
\begin{eqnarray}\label{arrival}
a(t)=\sum_{i=1}^{\infty}u(t-t_{i}^a),
\end{eqnarray}
with $u(t)$ the unit step, $t_{i}^a$ the arrival time of the $i^{th}$ D-load of any type, and an arrival rate of $\lambda(t)$ for these appliances. Each arrival event $i$ has an associated parameter vector $C_{i}$ that determines uniquely the time evolution of the load contribution when that appliance is turned on. In fact, we assume that, if turned on at time zero, the D-load injection is the complex phasor signal $g(t;C_i)$, one to one with $C_{i}$. In general, $C_{i}\in \mathbb{C}^{N}$ can be the Nyquist samples, or the Fourier or Wavelet coefficients of the known load evolution after activation. A simple example is that of EVs, for which $C_i$ is a two-dimensional vector, 
representing the charging rate and the fraction of battery capacity needed by the car upon its arrival.  
 $C_{i}$'s are assumed to be i.i.d. random vectors with a known stationary probability distribution $f_C(c)$, independent of the arrival times. Thus, the $i$th arrival event is modeled by a tuple $(t^a_{i},C_{i})$. 
If every arriving D-load is allowed in the system without any delay, the unscheduled demand profile would be:
\begin{equation}\label{lus}
L^{US}(t) =  \sum_{i=1}^{\infty} g(t - t_{i}^a;C_{i}).
\end{equation}
The degrees of freedom we will use in shaping the load is to delay the switch-on time for each D-load. Hence, the process of D-loads switching on is a {\it departure process}, in which the departure time $t_{i}^d>t_i^a$ is the time instant when the $i^{th}$ appliance is scheduled to switch on:
\begin{equation}\label{departure}
d(t)=\sum_{i=1}^{\infty}u(t-t_{i}^d),~~~t_{i}^d\geq t_{i}^a.
\end{equation}
In addition, it is convenient to include in $L^N(t)$ the load due to previously scheduled D-loads, since scheduled tasks are non-preemptable and we cannot reschedule them.  \footnote{This assumption is not as restrictive as it seems, since one could potentially impose an upper-bound to the request made, effectively breaking down the transaction in parts. We believe that this option would be much more practical than assuming jobs can be interrupted arbitrarily. }

Thus, the future values of $L^{S}(t)$ can be written as the following function of future departures
\begin{equation}\label{lev}
L^{S}(t) = \sum_{i\in I} g(t - t_i^d;C_i),~~~~t_i^d\geq t_i^a,
\end{equation}
where the set $I$ includes all D-loads that have yet not been scheduled, including those that have already arrived in the system at time $t$ but have not yet been authorized to function or the future arrivals.

\section{The Encoding Component of the DDLS}\label{commmod}\label{datarep}
To enable the optimal scheduling, the $i$th D-load communicates the tuple $(t_i^a,C_i)$ to the scheduler, upon its arrival.
Naturally, this information has to be digitized. 

First, the charging codes $C_i$ are quantized through a mapping $\Psi(C_i)$ onto $Q$ codes $C_q,~q=1,\ldots,Q$, forming the codebook ${\mathcal C}=\{C_1,\ldots,C_Q\}$. The effect of this mapping is twofold: 1) it obviously  
provides a digital representation of the incoming request that can be communicated; 2) it separates the  D-loads in $Q$ classes of service (queues), one per quantization code. 
The arrival rate for the $q$-th queue is:
\begin{equation*}
\lambda_{q}(t) = \lambda(t) \int_{x \in \Psi^{-1}(C_{q})}f_C(x)dx,~~~q=1,\ldots,Q.
\end{equation*}

Since arrivals in different queues are independent processes, the arrival process $a(t)$ can be divided into $Q$ separate processes whose state can be represented by a vector $\bar{a}(t) = [a_1(t) \ldots a_Q(t)]^T$ of length $Q$ with the property that $\Vert \bar{a}(t)\Vert_1 = a(t)$. Each element of this vector is given by,
\begin{equation}
a_q(t)=\sum_{i=1}^{\infty} \delta(\Psi(C_i) - C_q) u(t-t_{i}^a),
\end{equation}
where $\delta(.)$ here represents the Kronecker delta function. The corresponding departure processes can also be represented by a vector $\bar{d}(t) = [d_1(t) \ldots d_Q(t)]^T$, also satisfying $\Vert \bar{d}(t)\Vert_1 = d(t)$. Each $d_q(t)$ is:
\begin{equation}
d_q(t)=\sum_{i=1}^{\infty} \delta(\Psi(C_i) - C_q) u(t-t_{i}^d).
\end{equation}
 We also know that $\bar{a}(t) \succeq \bar{d}(t)$, where $\succeq$ represents element by element inequality. This is simply due to the fact that the the number of departures from each queue can never be larger than the number of arrivals.
The quantization of the charge codes allows a simple system representation of the relationship between the individual queue departure processes and the total D-load, that is equivalent to \eqref{lev} for $Q\rightarrow \infty$. In fact, since the $C_{q}$'s are discrete values, the synthesis formula for the quantized load reconstruction is:
\begin{eqnarray}\label{conv}
\hat L^{S}(t)&= &\sum_{q=1}^{Q} \sum_{i\in I} \delta(\Psi(C_i) - C_q) g(t - t_i^d;C_q)\\
& =& \sum_{q=1}^{Q} \frac{\partial}{\partial t}d_q(t)\star g({t};{C_{q}}),
\end{eqnarray}
where  $\frac{\partial}{\partial t}{d}_q(t)$ will produce a Kronecker delta at each $t_i^d$ in the $q$-th queue and $\star$ is the convolution sign. 

In order to work online and compute feasible schedules, the second step is to operate in discrete time intervals. Thus, we assume that the departure process $\bar{d}(t)$ can only have increments at discrete times:
\begin{equation}
t_{i}^d \in \lbrace \ell\triangle |\ell \in \mathbb{N}\rbrace.
\end{equation}
 Given the fact that both $t_{i}^d$'s and $C_q$'s are chosen from a discrete set, we can alternately represent the future D-loads contribution in \eqref{lev} for $t\geq \ell_0\triangle = \lceil\frac{t}{\triangle}\rceil\triangle$ as
\begin{equation}
\hat L^{S}(t)= \sum_{q=1}^{Q} \sum_{\ell=\ell_0}^{\infty} [{d}_q(\ell\triangle)-{d}_q((\ell-1)\triangle)] g({t-\ell\triangle};{C_{q}}).
\end{equation}
One can further assume that $\triangle^{-1}$ is higher than the Nyquist rate needed to sample the charge profile  $g(t;C)$ for all possible $\Psi(.)$ and that $g(t;C)$ is roughly constant over intervals of length $\triangle$. In this case, not only can the load be uniquely reconstructed from its samples but the arrival processes can also be represented in discrete time. From this point on we use a discrete model for arrivals, departure processes and for the load profile, and replace the previous quantities with their discrete-time counterparts:
\begin{equation} \left. \begin{array}{ll}
a_q(\ell\triangle)\rightarrow a_q(\ell),~~~d_q(\ell\triangle)\rightarrow d_q(\ell),~~~ \\
g({\ell\triangle};{C_{q}}) \rightarrow g_q(\ell),~~~~
\hat L^S(\ell\triangle) \rightarrow L^S(\ell).
\end{array} \right.
\end{equation}
The discrete-time load synthesis formula  from the decisions $d_{q}(\ell)$ and samples of the pulses $g_{q}(\ell)$ is:
\begin{eqnarray}\label{disload}
 L^{S}(\ell)&=& \sum_{q=1}^{Q} \sum_{k=\ell_0}^{\infty}[d_q(k)-d_{q}(k-1)]g_{q}(\ell-k) \nonumber\\
&=&  \sum_{q=1}^{Q} J d_{q}(\ell) \star g_{q}(\ell), ~~~~~~\ell\geq \ell_0,
\end{eqnarray}
where $J$ is the first difference operator. Fig. \ref{model} relates the queues states with the synthesis formula \eqref{disload}.

\begin{figure}
\centering
\includegraphics[scale=0.1]{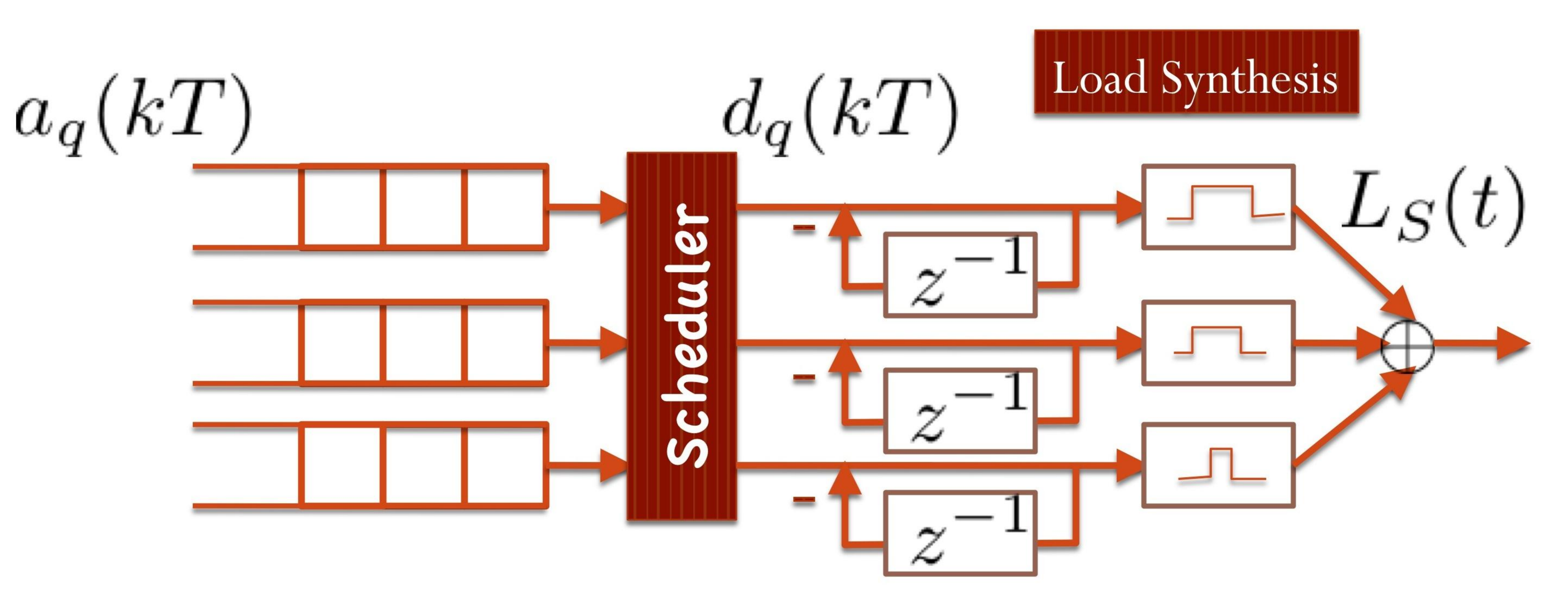}
\caption{Diagram of the Load Synthesis Equation}
\label{model}
\end{figure}

\subsection{The Inconvenience Cost: Average Delay Model}\label{dci}
In order to measure the customers rewards, another quantity that requires modeling is the delay experienced by participants in the DDLS program. Scheduling the energy use of appliances is typically accompanied by deadlines preferred by the consumer, similar to most job scheduling algorithms. But, as in many other scheduling problems that handle diverse job profiles, incorporating deadlines will make the algorithm increasingly complex and not scalable. Specifically, with a random and non-stationary task arrival and resource model, incorporating deadlines would make our problem definition not suitable to handle a large scale customer base. Hence, we follow the path of many scalable computing and communication systems existing today that provide best-effort scheduling services without providing guarantees about the timing of resource delivery to an individual transaction (a great example of which would be the Internet, which currently provides one single class of {\it best-effort} service to voice and video traffic, which are typically severely degraded by delay). Thus, our algorithm will tolerate occasional deadline misses and will provide the maximum average QoS to the entire population and in return, with much lower communication and computational requirements. This is reasonable since we imagine that DDLS will be voluntary program and only customers with enough degrees of flexibility in their demand will participate in it. Customers with stricter deadlines shall resort to conventional service providers or be charged more than those accepting best-effort services.

It is a well known result in traffic flow theory that the total delay experienced by the customers in a queue, i.e. $\sum_i (t_i^d - t_i^a)$, is equal to the area of the queue polygon, that represents the state of the queue $s(t)$ versus time, and is obtained by superimposing the departure and arrival profiles, i.e.,
\begin{equation}
s(t) = a(t) - d(t).
\end{equation}
The delay experienced in the past by the D-loads currently present in the system is not amendable and so, it is not useful in the formulation of the optimization. Hence, for the purpose of optimal scheduling in the future, one can replace the total delay cost with the total delay experienced by all the customers from now on, which we call the {\it Delay Cost Increment} (DCI):
\begin{equation}\label{delay}
\mbox{DCI}(t) \triangleq C_I \int_t^{\infty} s(\tau)d\tau = C_I \int_t^{\infty} (a(\tau) - d(\tau))d\tau,
\end{equation}
where $C_I$ is the cost per unit of time delay\footnote{For a time-dependent delay cost 
$\mbox{DCI}(t) = \int_t^{\infty} C_I (\tau) s(\tau)d\tau.$}.


 With quantization, it is also possible to define different delay costs $C_{I,q}$  for each service queue, allowing to offer higher QoS to customers in return for higher subscription rates.  To do so, we define a matrix $C_T = \mbox{diag}[C_{I,1}, \ldots C_{I,Q}]$:
\begin{equation}\label{delay5}
\mbox{DCI}(t)= \int_t^{\infty}\left\Vert\ C_T(\bar{a}(\tau)-\bar{d}(\tau)) \right\Vert_1 d\tau.
\end{equation}
In addition, when using a discrete set of decision epochs, 
%
\begin{equation}\label{delay4}
\mbox{DCI}(\ell_0)=\left\Vert \sum_{\ell=\ell_0}^{\infty}C_T[\bar{a}(\ell)-\bar{d}(\ell)]\right\Vert_1,
\end{equation}
Where $\ell_0\triangle = \lceil\frac{t}{\triangle}\rceil\triangle$. Note that the interchangeability of integration and the $\mathcal{L}_1$ norm is possible since $\bar{a}(\tau) - \bar{d}(\tau) \succeq 0$.

\subsection{Network model}\label{datagat}

The communication network supports two directions of communication: the uplink towards the scheduler, and the downlink activation feedback from the scheduler to the loads. 
We assume that the scheduler aggregates arrivals over a large number of customer premises, each of which pools the requests in a local Home Energy Management System (HEMS), interacting with the scheduler, which we refer as the Community Energy Management System (CEMS). While the CEMS gathers information and schedules the D-loads, another entity provides the interface to the market of generation assets.  
We call this agent, the Green Energy Management System (GEMS). 
The architecture is shown in Fig. \ref{fig.architecture}.

\begin{figure}
\centering
\includegraphics[scale=0.21]{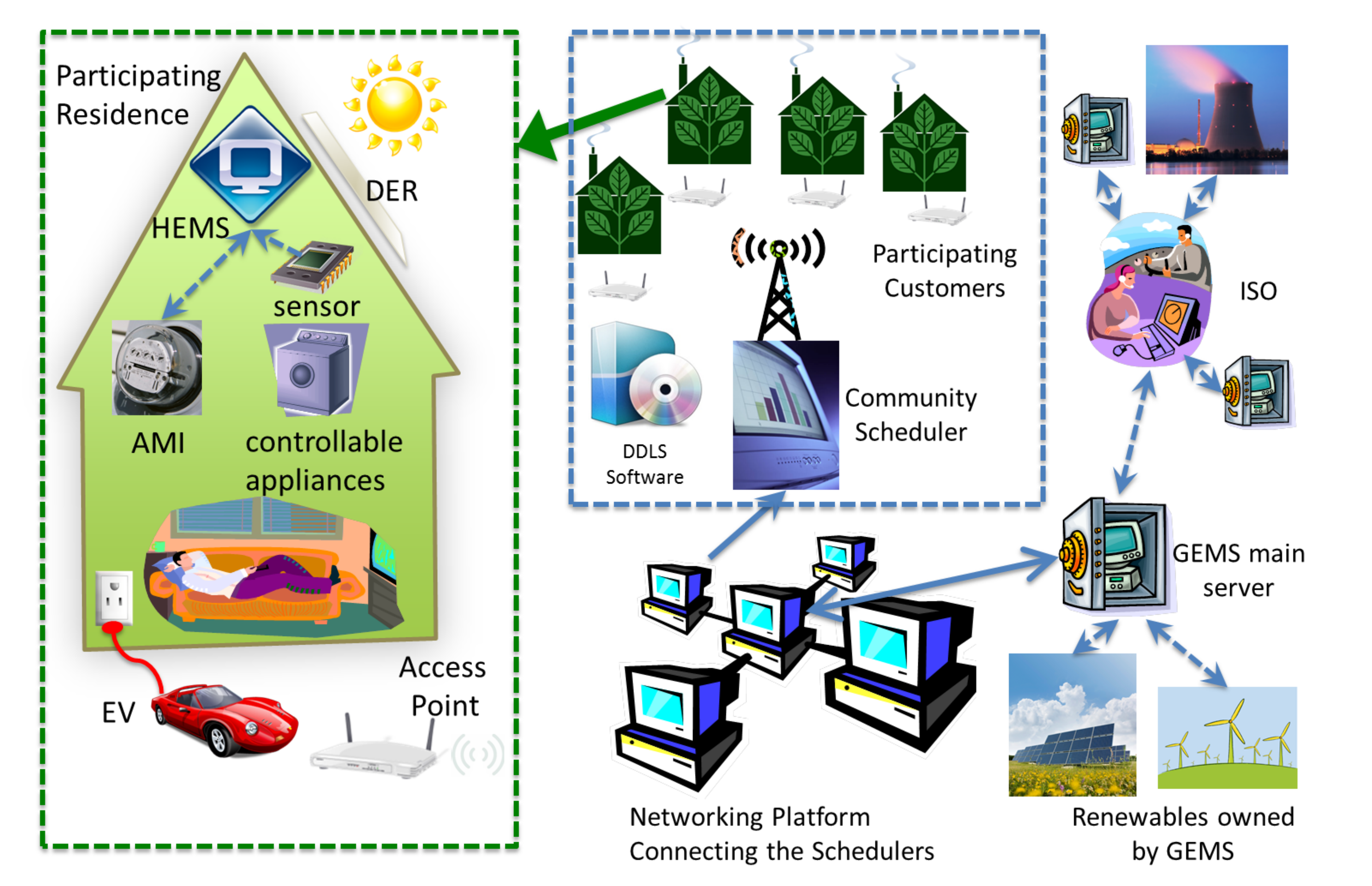}
\caption{The GEMS Architecture}
\label{fig.architecture}
\end{figure} 

The encoding component we laid out above provides a clear basis to compute exactly the rate requirements associated with the DDLS, and in this section we measure the communication rate requirements necessary to enable the control mechanism. 
\subsubsection{The Scheduler Uplink (HEMS to CEMS)}
To reconstruct and optimize the demand, the CEMS needs to know the increments of $\bar{a}(\ell \triangle)$, i.e., the arrivals in the queues.

We envision the HEMS as a software application run by non-dedicated hardware (a personal computer, or a smart-phone) and assume that the individual D-loads have a sensor network interface that allows them to convey their request to the HEMS in a local area network (via Bluetooth, Zigbee or new emerging standards for sensor network communications \cite{sensornetworks}).
HEMS locally compute the accrued inconvenience cost, and total consumption of D-loads in the premise over long time intervals. This information is communicated offline to track the quality of service delivered and for billing purposes. 
However, given that the scheduler only cares about $\bar{a}(\ell  \triangle)$, the data communicated for real time scheduling can be both anonymized, as well as aggregated as they flow towards the CEMS.
The time sensitive bits correspond to the digital communication of the tuple $(t_{i}^a,q)$. Clearly, this requires $\log Q$ bits for the charging class.
The arrival time $t_{i}^a\approx \ell_i^{a} \triangle$, where $\ell_i^{a}$ represents its finite precision representation.
Let $\ell_{i}^n>\ell_{i}^a$ be the {\it notification} time index of the arrival, i.e., the time when the event is first recorded and added to the corresponding queue.
Let $D$ be the maximum network delay in units $\triangle$,  such that with overwhelming probability,  
$\ell_{i}^n-\ell_{i}^a<D$; then, denoting by $\lfloor x\rfloor$ the floor function, this implies that $\lfloor  \ell_{i}^n/D\rfloor =\lfloor  \ell_{i}^a/D\rfloor$. Thus, the code 
$$p_i^a=\ell_{i}^a-\lfloor  \ell_{i}^a/D\rfloor$$
allows to reconstruct the arrival time as  
$\ell_{i}^a=\lfloor  \ell_{i}^n/D\rfloor+p_i^a.$
In this case, since $p_i^a\in \{0,\ldots,D-1\}$ clearly, encoding  $t_{i}^a$ only requires $\log_2 D$ bits. 
Hence, considering that $\lambda(\ell\triangle)$ is the traffic of D-loads arriving in the system, the HEMS access channel needs to support an aggregate traffic of 
\begin{equation}\label{eq.Rhems}
R_{\rm HEMS}(\ell)=\frac {1} {\triangle}\lambda(\ell\triangle)\log_{2}(DQ).
\end{equation}
 
As we discussed before, the traffic can be aggregated at the first network relay, acting as a base station (BS); for example a BS could map one to one with each area transformer or to a ISP node, by coalescing the arrival times into information about  $\bar{a}(\ell\triangle)$. If the arrivals follow a Poisson arrival process, $E\{\bar{a}(\ell\triangle)\}= \lambda(\ell\triangle)$ and the communication rate of the aggregate arrival vector is bounded by:
\begin{equation}\label{eq.Rcems}
R_{\rm CEMS}(\ell)= \frac 1 2 Q \log(2\pi e \lambda(\ell\triangle)).
\end{equation}

\subsubsection{Communicating the decisions back}
Consistent with our uplink model, we envision a downlink message structure that preserves the anonymity of the scheduled user. Once the CEMS decides the optimum schedule, it sends a feedback record, to let the vector $\bar d^{\rm opt}(\ell\triangle)$ of D-loads in. This feedback consists of a $Q \times 1$ vector $\bar{T}(\ell)$, which alerts all appliances in the corresponding classes $q=1,\ldots,Q$ that arrived before times $T_{q}(\ell)$ to enter the system.
The calculation of these vectors is performed as indicated in Fig. \ref{figdec}, i.e.,
\begin{equation}\label{alpha2}
T_{q}(\ell)=\max\lbrace \tau\leq \ell: a_{q}(\tau) \leq d_{q}^{\rm opt}(\ell)\rbrace.
\end{equation}

\begin{figure}
\centering
\includegraphics[width = 2.7in,height=1.5in]{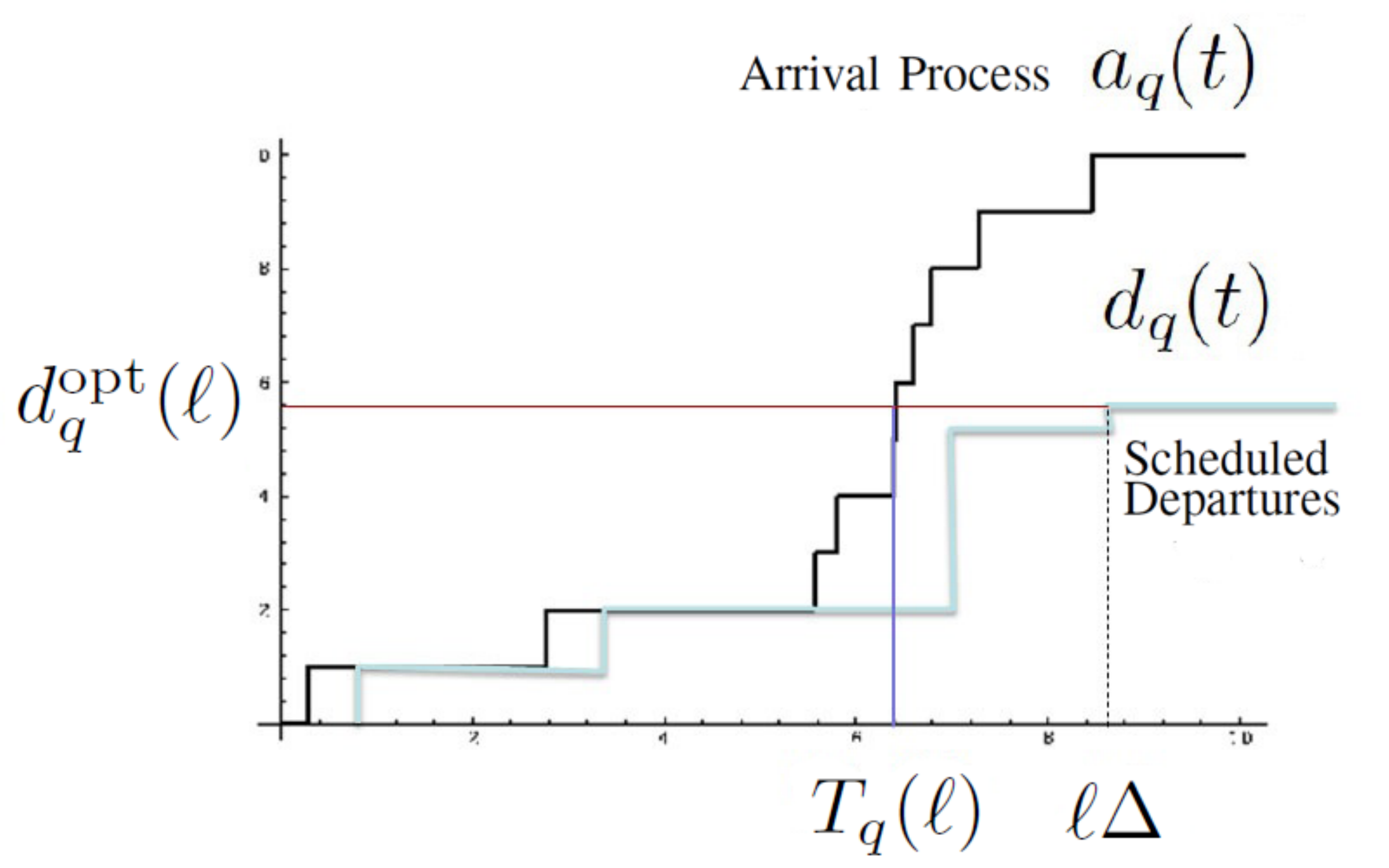}
\caption{\small Mapping decisions into feedback messages}
\label{figdec}
\end{figure}

This system makes sure that the departures match the desired value, while guaranteeing the anonymity of the information about the access. 
Also in this case it is not necessary to transmit absolute times $T_{q}(\ell)$. 
Considering that the delay is an explicit cost for the optimization, and that the optimum decisions are correlated since the queue states are correlated,  $T_{q}(\ell)$ can be differentially encoded with a relatively modest rate requirement. More specifically, assuming that $\rho_q$ is the minimum correlation coefficient among the decisions $d_q(\ell)$ for D-loads in any class and at any time, and that the variance of the delay for appliances is bounded by $\sigma_{T_q}^2$, then each feedback vector requires a number of bits per second:
$$R_{\mathrm{feedback}}={\cal O}\left(\frac 1 \triangle \sum_{q=1}^Q\frac 1 2 \log_2(e (1-\rho_q^2)\sigma_{T_q}^2)\right).$$ 

\subsection{Optimal Quantization Rate}
Suppose that $\gamma\left(g\left({t};{C}\right),g\left({t};{Q(C)}\right)\right)$ is a distortion metric, measuring the quantization error in the reproduction of the individual load profile. 
The average per queue and per load distortion $\chi_q$ caused by the load reconstruction technique is, 
\begin{align*}
		\chi_q	= \int_{t=0}^{\infty}
			\int_{x\in Q^{-1}(C_{q})}\gamma\left(g\left({t};{x}\right),g\left({t};{C_{q}})\right)\right)f_C(x)dt dx.
\end{align*}
Weighting the relative contributions of each queue by the corresponding arrival rate $\lambda_q^{\max}=\sup_t \lambda_q(t)$, the  average distortion per unit time $\chi_{tot}$ is bounded by 
\begin{equation}\nonumber
\chi_{tot} \leq \sum_{q=1}^{Q}\lambda_q^{\max}\chi_q.
\end{equation}
Hence, the number of codes/queues $Q$ can be chosen based on an appropriate rate-distortion optimization. For example:

1)~Minimize $Q$ (and, thus, the rate) while meeting a desired maximum distortion $\chi*$ in load reconstruction, i.e.,

\begin{equation*}\left. \begin{array}{ll}\label{detq}
{\min}~Q~~\mbox{s.t.}~~\sum_{q=1}^{Q}\lambda_q^{\max}\chi_q\leq \chi*.
\end{array} \right.
\end{equation*}

2)~Minimize the distortion, limiting the maximum bit rate, 
\begin{equation*}\left. \begin{array}{ll}
\min_{{\cal C}} \sum_{q=1}^{Q}\lambda_q^{\max}\chi_q\\
\mbox{s.t.} ~~\underset{\lambda}\max ~R_{\rm HEMS} \leq R^{\max}_1~\underset{\lambda}\max ~ R_{\rm CEMS} \leq R^{\max}_2
\end{array} \right.
\end{equation*}
where the rate functions $R_{\rm HEMS}$ and $R_{\rm CEMS}$ are defined in \eqref{eq.Rhems}-\eqref{eq.Rcems} respectively. It is clear that the optimum in both cases is reached when the constraints are tight.

Next we sketch a few necessary elements of the market interface of our aggregator. We leave the complete development and analysis of this market interface to future work.

\section{The Electricity Market Interface}\label{sec.elmarket}
We assume that the GEMS cannot use its local renewables to export power to the upstream distribution grid and its generated electricity can only be used locally. The overall system including  the customers, renewables, CEMS and GEMS, needs to behave as a single retailer in the wholesale power market and it can also provide ancillary services. The various operational costs that the cell may have include:
\vspace{.2cm}
\\ {1) \bf Wholesale market day-ahead bidding cost}: by looking on the day-ahead forecasts of its local generation units and its demand pattern, the market interface issues a bid in the day-ahead wholesale market to purchase a certain amount of power for every hour of the next day, so that it can safely serve all of its load reliably. 
\\ {2) \bf Wholesale market real-time bidding cost}: if local generation and day-ahead bid are insufficient to meet the demand, the market interface requests more energy from the central grid in the spot market. Also, if it has purchased extra amount of energy that it cannot consume, it can either sell it back to the grid or pay for negative spinning reserve, in case there is no demand for it. To model this cost, we assume that there is what we call the {\it supply curve} $B(t)+R(t)$, which includes the day-ahead bid plus  the the cell local renewable generation and that  there is a per unit cost of $C_{up}(t)$ for deviating upward and a per unit cost of $C_{dn}(t)$ for a downward deviation with respect to the supply curve. Subtracting the uncontrollable load $L^N(t)$ from the supply curve, we denote the Zero Incremental Cost (ZIC) power profile for D-loads by $P(t)$:
\begin{equation}\label{eq.zic}
P(t) = B(t) + R(t) - L^N(t).~~~~(\mathrm{ZIC-profile})
\end{equation}
For short look-ahead horizons, we assume that a reasonably accurate estimate of $R(t)$ is available and thus, $P(t)$ can be considered a deterministic function.
Also, we assume that near perfect estimates of the upward and downward balancing prices are available. For results on market clearing price forecasting techniques see \cite{olsson}.
\\ {3) \bf Inconvenience cost paid to customers:} This cost is paid either directly or indirectly to the customer, as an incentive to participate in DDLS programs.
We will model this cost using the concept of DCI introduced in \ref{dci}. The inconvenience cost per unit time can be variable with time of day and the queue in which the D-load is placed.

The cell should also purchase energy from its local renewable resources (if they are not owned by the cell). To cover the above mentioned costs, the cell bills its customers for the electricity it purchases from the wholesale market and on-sells to them. Also, a retailer with a DDLS program is in fact capable of providing several types of ancillary services with different dynamics and can provide peak shaving and price control services to the utility or the ISO in return for appropriate reimbursements to cover its costs. We leave a detailed discussion of these issues to our subsequent papers and instead, we will mainly focus on the scenario in which the cell has already bid in the day-ahead market $B(t)$ and has access to the information to determine the ZIC profile in \eqref{eq.zic}, and wants to minimize its costs in real-time.

\section{DDLS: The Real-time Scheduling Problem}\label{micrort}
The GEMS, which is in charge of the market interface of the cell, needs to purchase energy from the central wholesale market in order to satisfy the cell demand.  
Thus, each day, the GEMS solves an optimization problem to determine the optimal day-ahead bid $B^*(\ell), \ell = 1,\ldots,24$ to be placed in the wholesale market for each hour of the next day. For the rest of this paper, we will assume that the day-ahead bidding process is done and focus on the real-time operation of the cell scheduler. The discussion on how to place these bids based on predictions of the wholesale market price, local generation resources and taking the DDLS ability into account requires considerable amount of space and will be presented in future works (for preliminary results, see \cite{ISCCSP}).

In real-time, the goal is to minimize the accumulated cost of operation over time. This can be accomplished through a sequential decision maker. In order to derive scheduling decisions that are foresighted, we choose to use a Model Predictive Control (MPC) model. Following our previous notation,  $P(\ell)$ , $R(\ell)$, $L^{S}(\ell)$, $C_{up}(\ell)$ and $C_{dn}(\ell)$ are samples of their continuous counterparts at $t=\ell\triangle$ (the costs are also multiplied by the interval length $\triangle$).
 Assuming a look-ahead horizon of $T$ units of time, and denoting by $u_q$ the duration of the jobs of appliances in the $q$-th queue, we can formulate the general real-time scheduling problem for the cell as,
\begin{align}\label{mainopt1}
D^{*} = \mbox{argmin}
_{D}~ &\operatorname{E}_A[\mbox{Cost of retail entity in real time}]\nonumber\\ 
 \mbox{argmin}_{D}~ &\sum_{\ell=\ell_0}^{\ell_0+T}\operatorname{E}_A \lbrace C_\ell(L^N(\ell) + L^S(\ell,D),B^*(\ell),R(\ell)) \nonumber\\ &~~~~~+ \mbox{DCI}(\ell_0,D)\} 
\end{align}
$$\mbox{s.t.}~~~ \forall \ell \leq \ell_0+T, q\in \{1,\ldots,Q\}:~~ d_{q}(\ell-1) \leq d_{q}(\ell) \leq a_{q}(\ell)$$
$$~~~d_{q}(\ell) \in  \mathbb{N},~~~ d_{q}(\ell+T-u_q) = a_{q}(\ell+T-u_q),$$
where $C_\ell(.)$ represents a prediction of the \textit{real-time} cost that the cell incurs at epoch $\ell$ when purchasing electricity equal to $L^N(\ell) + L^S(D,\ell) - (B^*(\ell)+R(\ell))$ from the local intermittent resources or a central wholesale market (if negative, the retail utility may have to pay for negative spinning reserve). The matrix $D$ represents the set of scheduling decisions that will be made  in the look-ahead horizon, i.e. $D = [\bar{d}(\ell),\ldots,\bar{d}(\ell_0+T)]$ and $D^{*}$ represents its optimal value. The matrix $A=[\bar{a}(\ell),\ldots,\bar{a}(\ell_0+T)]$ represents the arrival process. $\mathcal{D}$ is the entire decision space, i.e., $D \in \mathcal{D}$ and $\mathcal{A}$ is the space defined by the arrival process, i.e., $A \in \mathcal{A}$.  $\mbox{DCI}(\ell_0,D)$ is the cumulative cost of delaying the customers, calculated in a finite horizon $T$. The first and second constraints are due to causality and the third constraint requires that all the arriving appliances finish their jobs by time $\ell_0+T$. Stricter deadlines can be imposed to provide higher QoS to certain queues.

Considering the costs defined in the Section \ref{sec.elmarket}, and denoting respectively by $P_{up}(\ell) $ and $P_{dn}(\ell)$ the upward and downward deviations from the ZIC power supply $P(\ell)$ in \eqref{eq.zic} for the D-loads, the scheduling problem in \eqref{mainopt1} can be formulated as the following mixed integer optimization problem,
\begin{align}\label{mainopt2}
 &\min_{D}  ~ \operatorname{E}\lbrace \sum_{\ell=\ell_0}^{\ell_0+T} [C_{up}(\ell)P_{up}(\ell)+ C_{dn}(\ell)P_{dn}(\ell)\nonumber \\&~~~~~~~~~+\sum_{q=1}^QC_{I,q}(a_q(\ell) - d_q(\ell))  ]\rbrace 
\end{align}
$$\mbox{s.t.}~~~ \forall \ell \leq \ell_0+T, q\in \{1,\ldots,Q\}:~~ d_{q}(\ell-1) \leq d_{q}(\ell) \leq a_{q}(\ell)$$
$$~~~d_{q}(\ell) \in  \mathbb{N},~~~ d_{q}(\ell+T-u_q) = a_{q}(\ell+T-u_q),$$
$$\forall \ell \leq \ell_0+T:~~P_{up}(\ell) \geq 0,~~~P_{dn}(\ell) \geq 0,~~~P_{dn}(\ell)P_{up}(\ell) = 0,$$
$$P_{up}(\ell) - P_{dn}(\ell) + P(\ell) = \Re\{L^{S}(\ell,D)\},$$
where $\Re[ L^{S}(\ell,D)]$ represents the active (real) part of the D-load for $\ell>\ell_0$.

After solving the above optimization problem at time $\ell_0$, the scheduler informs the D-loads about its decisions on the optimum departure process $d_q(\ell_0)$ for each queue, and discards the decisions made for the future (the dummy variables). Afterwards, it needs to solve the optimization at time $\ell_0+1$ again. Since we assume that scheduled tasks are not interruptible, the load due to D-loads that were scheduled to start their job at time $\ell_0$ is added to the non-controllable part of the load at and after time $\ell_0+1$, i.e.,for $\ell \geq \ell_0+1$:
$$L^N(\ell) \rightarrow L^N(\ell) + \sum_{q=1}^{Q} [d_q(\ell_0)-d_{q}(\ell_0-1)]g_{q}(\ell-\ell_0).$$

 \begin{remark} The finite horizon assumption is made since ${P}(\ell)$ is non-stationary and our knowledge about it cannot be considered perfect in an infinite horizon. Also, forecast errors for the arrival pattern of customers increases with time.\end{remark}

 \begin{remark} In the following, we will ignore the reactive power load due to the scheduled D-loads and assume that the pulses $g_q(\ell)$ will represent active power requests only (i.e., $\Re[ L^{S}(\ell)] = L^{S}(\ell)$. But, if desired, one can easily incorporate reactive power requirements in the above optimization problem as a constraint:
$$
\Im [L^{S}(\ell)] = \sum_{q=1}^{Q} \sum_{i=\ell_0}^{\ell} (d_q(i)-d_q(i-1))\Im[g_q(\ell-i)]  \leq \epsilon .
$$\end{remark} For brevity, from this point on we omit the dependency on $D$ of both $L^{S}(\ell;D)$ and $\mbox{DCI}(\ell_0,D) $. Next, we relax the problem \eqref{mainopt2}, considering two scenarios for the arrival process. 

\vspace{-0.3cm}
\subsection{Scenario I}\label{scen3}
In this scenario, we look at the case where the arrival process is deterministic but the rate $\lambda_q(t)$ is not constant in time. This problem can be formulated as \eqref{mainopt2} without the expected value operator.

\begin{lemma}If we ignore the case where $C_{dn}(\ell) < 0$, the constraint $P_{dn}(\ell)P_{up}(\ell) = 0$ can be eliminated from \eqref{mainopt2}.\end{lemma}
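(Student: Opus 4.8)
The plan is to show that the complementarity constraint $P_{dn}(\ell)P_{up}(\ell)=0$ is automatically met by at least one optimal solution, so deleting it leaves the optimal value (and the optimal schedule $D$) unchanged. First I would fix a decision matrix $D$ (and, in the stochastic reading of \eqref{mainopt2}, a realization of the arrival process) and look at the last equality constraint $P_{up}(\ell)-P_{dn}(\ell)+P(\ell)=\Re\{L^{S}(\ell,D)\}$. This constraint pins down only the \emph{difference} $P_{up}(\ell)-P_{dn}(\ell)=v_\ell$, where $v_\ell:=\Re\{L^{S}(\ell,D)\}-P(\ell)$, not the two nonnegative variables individually. Together with $P_{up}(\ell),P_{dn}(\ell)\ge 0$, the feasible set for the pair is exactly $\{(v_\ell^{+}+s,\,v_\ell^{-}+s):s\ge 0\}$, with $v_\ell^{+}=\max\{v_\ell,0\}$ and $v_\ell^{-}=\max\{-v_\ell,0\}$; i.e.\ the only remaining freedom is a common nonnegative shift $s$.

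Next I would substitute this parametrization into the slack part of the objective: $C_{up}(\ell)P_{up}(\ell)+C_{dn}(\ell)P_{dn}(\ell)=C_{up}(\ell)v_\ell^{+}+C_{dn}(\ell)v_\ell^{-}+\bigl(C_{up}(\ell)+C_{dn}(\ell)\bigr)s$. Since $C_{up}(\ell)\ge 0$ and, by the hypothesis that excludes $C_{dn}(\ell)<0$, also $C_{dn}(\ell)\ge 0$, the coefficient of $s$ is nonnegative, so the minimum over $s\ge 0$ is attained at $s=0$. That choice yields $(P_{up}(\ell),P_{dn}(\ell))=(v_\ell^{+},v_\ell^{-})$, and since at least one of $v_\ell^{+},v_\ell^{-}$ is zero, it satisfies $P_{dn}(\ell)P_{up}(\ell)=0$. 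Carrying this out for every $\ell$ (and every realization) shows that for any $D$ there is a cost-minimizing choice of slack variables that already obeys the dropped constraint.

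Finally I would close the redundancy argument: the relaxed problem obtained by removing $P_{dn}(\ell)P_{up}(\ell)=0$ has optimal value no larger than that of \eqref{mainopt2}; conversely, given any optimal solution of the relaxed problem, replacing each $(P_{up}(\ell),P_{dn}(\ell))$ by $(v_\ell^{+},v_\ell^{-})$ keeps the same $D$, does not increase the objective, and gives a point feasible for the original problem. Hence both problems share the same optimal value and the same optimal $D$, which is what the scheduler uses. I do not anticipate a genuine obstacle; the only delicate point is the sign condition $C_{up}(\ell)+C_{dn}(\ell)\ge 0$, which is exactly why the case $C_{dn}(\ell)<0$ must be set aside (and why $C_{up}(\ell)\ge 0$, i.e.\ upward deviations are genuinely costly, is used implicitly): if $C_{dn}(\ell)$ could be negative enough, the coefficient of $s$ would be negative, the relaxed problem would be unbounded in $s$, and the complementarity constraint would become active rather than redundant.
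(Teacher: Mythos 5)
Your argument is essentially the paper's: the paper also observes that if $P^*_{up}(\ell)P^*_{dn}(\ell)>0$ one can shift both slacks down by the common overlap (setting the smaller to $0$ and the larger to $|P^*_{up}(\ell)-P^*_{dn}(\ell)|$), preserving the equality and nonnegativity constraints while not increasing the cost. Your version is just a more explicit parametrization of the same idea, and your remark that the real sign condition is $C_{up}(\ell)+C_{dn}(\ell)\ge 0$ (with $C_{up}(\ell)\ge 0$ used implicitly) is a correct and slightly sharper reading of the hypothesis than the paper makes explicit.
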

\begin{proof} (by contradiction). Assume that for epoch $\ell$, the optimum solution to \eqref{mainopt2}  is such that $P^*_{dn}(\ell)P^*_{up}(\ell) > 0$. One can replace these values by setting the smaller number to 0 and the larger one to $|P^*_{dn}(\ell) - P^*_{up}(\ell)|$. The constraints will all still hold while the cost will be reduced. Thus, the optimization problem without the constraint $ P^*_{dn}(\ell)P^*_{up}(\ell) = 0$ is equivalent to  \eqref{mainopt2}.
\end{proof}

To continue, we define the following matrices,
\begin{eqnarray}\label{defv}
D &=& [\bar{d}(\ell_0),\ldots , \bar{d}(\ell_0+T)]\nonumber\\ A&=&[\bar{a}(\ell_0),\ldots , \bar{a}(\ell_0+T)]\nonumber\\P&=& [P(\ell_0),\ldots ,P(\ell_0+T)]\nonumber\\
P_{up}&=& [P_{up}(\ell_0),\ldots ,P_{up}(\ell_0+T)]\nonumber\\
P_{dn}&=& [P_{dn}(\ell_0),\ldots ,P_{dn}(\ell_0+T)]\nonumber\\
C_{up} &=& [C_{dv}(\ell_0),\ldots,C_{up}(\ell_0+T)]\nonumber\\
C_{dn} &=& [C_{dn}(\ell_0),\ldots,C_{dn}(\ell_0+T)]\nonumber\\
C_{T} &=&  [C_{I,1},\ldots,C_{I,Q}].
\end{eqnarray}
\begin{lemma} \eqref{mainopt2} can be cast in the following constrained mixed-integer  linear programming problem:
\begin{equation} \label{opt7}
\min_{D,P_{up},P_{dn}} C_{up}P_{up}^T + C_{dn}P_{dn}^T - (C_T\otimes\mathbf{1}_{1\times L})\mbox{vec}(D^T)
\end{equation}
$$\mbox{s.t.}~~~  0 \preceq \mbox{vec}(D^T) \preceq \mbox{vec}(A^T),~ (I\otimes J^T)\mbox{vec}(D^T) \succeq 0,$$
$$ P_{up}\succeq 0, ~~ P_{dn}\succeq 0,~~\Gamma \mbox{vec}(D^T) + P^T + P_{up}^T - P_{dn}^T = 0, $$
$$ D \in  \mathbb{N}^{Q\times (T+1)}$$
where $\mbox{vec}$ denotes the vectorization operation. The matrix $\Gamma = [\Gamma_1~,\Gamma_2,~\ldots ~,\Gamma_Q](I\otimes J^T)$, where $J$ is the first difference operator,
and the matrices $\Gamma_q$ are the Toeplitz matrices associated with each $g_q(\ell) = g(\ell;C_q)$, with first row equal to
$[g_q(1), 0, \ldots , 0 ]$ and first column equal to $[g_q(1),g_q(2),\ldots,g_q(T)]$.
\end{lemma}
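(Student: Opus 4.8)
The plan is to observe that, once the complementarity constraint $P_{dn}(\ell)P_{up}(\ell)=0$ has been removed by the previous lemma (so that the standing assumption $C_{dn}(\ell)\ge0$ is carried over), problem \eqref{mainopt2} in Scenario~I is \emph{already} linear in the decision triple $(D,P_{up},P_{dn})$ once the load synthesis formula \eqref{disload} is written in matrix form. The lemma is therefore a change of notation rather than a genuine reduction, and the actual work is to exhibit the Toeplitz blocks $\Gamma_q$ and the Kronecker structure explicitly, and to check that the \emph{argmin} (not merely the optimal value) is preserved when a constant is discarded.

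First I would translate the objective. The terms $\sum_{\ell}C_{up}(\ell)P_{up}(\ell)$ and $\sum_{\ell}C_{dn}(\ell)P_{dn}(\ell)$ are exactly the inner products $C_{up}P_{up}^{T}$ and $C_{dn}P_{dn}^{T}$ under the stacking conventions of \eqref{defv}. For the delay term, $\sum_{\ell=\ell_0}^{\ell_0+T}\sum_{q}C_{I,q}\,(a_q(\ell)-d_q(\ell))$ splits into $\sum_q C_{I,q}\sum_\ell a_q(\ell)$, which in Scenario~I is a constant (the arrivals are given) and hence irrelevant to the $\mathrm{argmin}$, and $-\sum_q C_{I,q}\sum_\ell d_q(\ell)$; writing $\mathrm{vec}(D^{T})$ as the column obtained by stacking the rows $[d_q(\ell_0),\dots,d_q(\ell_0+T)]$ of $D$, this second piece is precisely $-(C_T\otimes\mathbf{1}_{1\times L})\mathrm{vec}(D^{T})$ with $L=T+1$, yielding the objective displayed in \eqref{opt7}.

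Next I would translate the constraints. The box/causality constraints $d_q(\ell-1)\le d_q(\ell)\le a_q(\ell)$ separate into the upper bound $\mathrm{vec}(D^{T})\preceq\mathrm{vec}(A^{T})$ and, using that applying a fixed $L\times L$ map to each row-block of $\mathrm{vec}(D^{T})$ is left multiplication by $I_Q\otimes(\cdot)$, the monotonicity block $(I\otimes J^{T})\mathrm{vec}(D^{T})\succeq0$ with $J$ the first-difference operator (after the harmless relabeling that places the current backlog at the origin, so $d_q(\ell_0-1)=0$, which also makes $\mathrm{vec}(D^{T})\succeq0$ redundant but available for clarity); the terminal deadline equalities $d_q(\ell_0+T-u_q)=a_q(\ell_0+T-u_q)$ and any tighter per-queue QoS deadlines are themselves linear in $\mathrm{vec}(D^{T})$ and are appended to the feasible set (suppressed in the display for compactness). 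The only nontrivial item is the power-balance equality $P_{up}(\ell)-P_{dn}(\ell)+P(\ell)=L^{S}(\ell)$: from \eqref{disload}, for each $q$ the contribution $Jd_q(\cdot)\star g_q(\cdot)$ is a finite, causal convolution, so, collected over $\ell=\ell_0,\dots,\ell_0+T$, it equals $\Gamma_q$ times the first-difference vector $J^{T}d_{q,\cdot}^{T}$, where $\Gamma_q$ is exactly the lower-triangular Toeplitz matrix of $g_q$ described in the statement (recall the pulse is sampled above the Nyquist rate and the previously committed D-loads have been moved into $L^{N}$, so the convolution is exact and starts clean at $\ell_0$, with no boundary terms from $k<\ell_0$). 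Stacking over $q$ gives $L^{S}=[\Gamma_1,\dots,\Gamma_Q](I\otimes J^{T})\mathrm{vec}(D^{T})=\Gamma\,\mathrm{vec}(D^{T})$, and moving $L^{S}$ across yields the affine equality of \eqref{opt7}, with signs fixed by the injection convention of \eqref{eq.zic} for $g_q$.

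Finally I would assemble the pieces: the objective is linear in $(\mathrm{vec}(D^{T}),P_{up},P_{dn})$, every constraint is a linear (in)equality together with the integrality $D\in\mathbb{N}^{Q\times(T+1)}$, the only data discarded from \eqref{mainopt2} is the $D$-independent constant $\sum_q C_{I,q}\sum_\ell a_q(\ell)$, and the feasible sets coincide; hence \eqref{opt7} is a mixed-integer linear program with the same minimizers as \eqref{mainopt2}. The main point requiring care — rather than a deep obstacle — is the convolution-to-Toeplitz identification together with the index bookkeeping: one must verify that the inner sum $\sum_{k=\ell_0}^{\ell}$ in \eqref{disload}, combined with the causal support of $g_q$, really reproduces the stated lower-triangular Toeplitz $\Gamma_q$, which is precisely what the ``absorb committed loads into $L^{N}$'' and ``sample above Nyquist'' assumptions are there to guarantee.
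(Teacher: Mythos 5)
Your proposal is correct and follows essentially the same route as the paper's own proof: the convolution-to-Toeplitz identification of $L^{S}$ via $\mbox{vec}((DJ)^T)=(I\otimes J^T)\mbox{vec}(D^T)$, the rewriting of the delay cost as $(C_T\otimes\mathbf{1}_{1\times L})(\mbox{vec}(A^T)-\mbox{vec}(D^T))$ with the arrival-dependent constant discarded, and the direct vectorization of the remaining constraints. Your version is merely more explicit about the index bookkeeping and about why dropping the constant preserves the argmin, which the paper states more tersely.
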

\begin{proof} We showed that the load due to D-loads is the sum of the following $Q$ functions:
\begin{equation}\left. \begin{array}{ll}
\forall q: ~L^{S}_q(\ell)= \sum_{i=\ell_0}^{\ell_0+T}  (D(q,i) - D(q,i-1))g_q(\ell-i)
\end{array} \right.
\end{equation}
which is basically the convolution of first difference of the $q^{th}$ row of $D$ with the vector $[g_q(1) ,\ldots, g_q(T)]$. Thus, using $Q$ Toeplitz matrices, each associated with one of the $g_q(\ell)$'s, and considering that $\mbox{vec}((DJ)^T)=(I\otimes J^T) \mbox{vec}(D^T)$, we can rewrite the load due to D-loads as the following vector
\begin{eqnarray}
L^{S} &=& [\Gamma_1~,\Gamma_2,~\ldots ~,\Gamma_Q](I\otimes J^T) \mbox{vec}(D^T). \nonumber
\end{eqnarray}
On the other hand, the DCI in \eqref{delay5} can be rewritten as
\begin{equation}\left. \begin{array}{ll}
\Vert C_T (A - D) \Vert_1 = (C_T\otimes\mathbf{1}_{1\times L}) (\mbox{vec}(A^T)-\mbox{vec}(D^T)). \end{array} \right.
\end{equation}
With the first term independent of the decision variables and thus, not contributing to the objective function minimization, we can add only the second term to the cost to represent the delay cost. The rest of the constraints in \eqref{opt7} are just transformations of the constraints in \eqref{mainopt2} to vector form. 
\end{proof}

Since large-scale integer programs like \eqref{opt7} are complex to solve, we may choose to relax the problem by omitting the integrality constraints for the decision variables. The linear program obtained gives a lower bound on the optimal value of the integer linear programming problem (the feasible
region of the integer program is a subset of its corresponding LP relaxation problem) After solving the relaxed problem, we can round the fractional values to obtain integral solutions. A possible modification is to try relaxations based branch and bound algorithms and recurse into several relaxed subproblems until a good solution is found. Due to the large size of our problem, we resort to the simpler rounding solution.


\vspace{-0.4cm}
\subsection{Scenario II}
In this scenario, customers arrive according to a random arrival process with a non-homogeneous rate $\lambda(t)$, e.g. a Poisson process. Since we no longer have deterministic information about future arrivals and only the past is fully observable for us, we need to solve a stochastic shortest path problem:
\begin{eqnarray} \label{opt12}
C^{*}_{\ell_0}(\bar{x}(\ell_0)) = 
\min_{\bar{d}} \sum_{\ell=\ell_0}^{\infty}  \operatorname{E}_{\bar{a}} \left\{c(\bar{x}(\ell),\bar{d}(\ell),\bar{a}(\ell))\right\},
\end{eqnarray}
where the vector $\bar{x}(\ell)$, representing the state (expanded to include previous decisions), and the function $c(.)$ representing the cost incurred at time $\ell$ are defined as
$$\bar{x}(\ell) = [P(\ell),\bar{d}(\ell-\min\lbrace \ell-\ell_0, S\rbrace),\ldots,\bar{d}(\ell-1)]^T,$$
\begin{eqnarray} 
c(\bar{x}(\ell),\bar{d}(\ell),\bar{a}(\ell))&=& C_{up}(\ell)P_{up}(\ell)+ C_{dn}(\ell)P_{dn}(\ell)\nonumber\\&+& \sum_{q=1}^QC_{I,q}(a_q(\ell) - d_q(\ell)).\nonumber
\end{eqnarray} 
A state variable is the minimal history that is
necessary and sufficient to compute the decision function \cite{castlelab}. For our case, this includes all the decisions made in the past that can affect the current load of the system and the ZIC power profile in \eqref{eq.zic}. If we assume that the appliances can be on for a maximum duration of $S\triangle$ units of time, the last $S$ decisions should be included in the state variable.

As our knowledge about the intermittent resources and the arrival trends of D-loads will degrade significantly with time, solving \eqref{opt12} is not practical. Based on this fact, we divide our look-ahead horizon into the categories below:
\\[0.1cm]
1) Full statistical knowledge interval: in this interval, we have full statistical information about the arrival process and the amount of available resource and thus, we exploit this knowledge in our decision strategy.
\\[0.1cm]
2) Imperfect information interval: during this interval, we have some information about the arrival and generation processes but our knowledge is not as accurate as the first interval and thus, we will only use the expected value of predictions.
\\[0.1cm]
3) No useful knowledge interval: after a certain amount of time, our knowledge about the arrival process and the resource will degrade so significantly that we would rather not incorporate them in our decision.

Hence, we redefine our optimized scheduling problem as:
\begin{equation} \label{opt10}
\min_{D\in \mathcal{D}} \operatorname{E}_{\bar{x}}\left\{ \sum_{\ell=\ell_0}^{\ell_0+T-1} c(\bar{x}(\ell),\bar{d}(\ell),\bar{a}(\ell)) +\hat{C}_{\ell_0+T}(\bar{x}(\ell_0+T))\right\}
\end{equation}
where $\hat{C}_{\ell_0+T}(x(\ell_0+T))$ is the approximation of the true cost-to-go function $C^{*}_{\ell_0+T}(x(\ell_0+T))$ after $T$ stages (the length of the full statistical knowledge interval). This approximation is found using a base suboptimal policy that we solve for assuming that the uncertain quantities in the imperfect information interval are fixed at their typical values. Thus, this is an $T$-step lookahead policy, with the optimal cost-to-go approximated by the cost-to-go of a certainty equivalent controller. Note that we have already discussed the possible strategies to solve this problem in scenario III. Also, we can reduce the number of possible states by using state aggregation strategies in the complete statistical knowledge interval in order to construct a more simple and tractable problem. This will make the computation much less demanding.

\begin{remark} We solved the scheduling problem by ignoring flow constraints of the distribution network. Since we do not take individual D-load delays into account and we only care about the total delay experienced by the customers, these flow constraints can be incorporated in the design with a simple modification using the following suboptimal policy: If the BS knows that it does not have distribution capacity to let any more appliances start functioning, it sends a full capacity message to the CEMS, which can be incorporated into  \eqref{alpha2}.\end{remark}

\vspace{-0.2cm}
\section{Numerical Results}\label{numerical}
In this section, we compare the effectiveness of the introduced scheduling policies exercised at a cell that controls the charging of a fleet of around 15,000 vehicles to the following two scenarios: the case where no control mechanism is provided over the load and the case where the control is applied in a more distributed fashion with 500 local schedulers spread across the area where the program is implemented. We assumed that the following information are available to the scheduler: 1) perfect predictions of the mean arrival rate of vehicles $\lambda(t)$; 
2) causal observability of the arrival process $a(t)$;
3)  predictions of the ZIC power profile \eqref{eq.zic} in the lookahead horizon;
4) predictions of the upward and downward clearing prices in the lookahead horizon; and
5) accurate estimates of the load injections $g_q(t)$.
To avoid computational complexities and to make the resulting demand profiles more meaningful to the reader, we assumed that the charging duration of the vehicles has a maximum of 8 hours and is quantized in 15 minute intervals, which is equal to the frequency with which the scheduler solves the optimization problem. The appliances arrive in the system following a Poisson arrival process with a mean arrival rate $\lambda = 12$ per hour for each of the 32 queues, assumed to be constant, in order for the results to be interpretable. The optimizers solve the scheduling problem with a certainty equivalent controller that uses a relaxed linear programming approach to determine the best scheduling strategy. After each step, the scheduled D-loads are added to the uncontrollable load term for future epochs.  The upward and downward balancing prices were chosen to be equal and the charging pulses were assumed to be square pulses scaled to match the duration of charging. 

Fig. \ref{sim} compares the demand in the following two scenarios: coordinated charging of all vehicles by one scheduler unit and uncontrolled charging.  The look-ahead horizon in these simulations is assumed to be 8 hours and to be fair, no appliance is allowed to be delayed beyond 8 hours after it's arrival, which in reality can differ for different queues. Thus, the number of appliances receiving service is equal in the two scenarios. The results show that the average delay experienced by customers if everybody participates in the DDLS  program is less than one unit of time and due to the considerable decrease in the deviation costs, there is a relative 41 percent reduction in operational costs compared to the uncontrolled case. 
\begin{figure}
\centering
\includegraphics[scale=0.37]{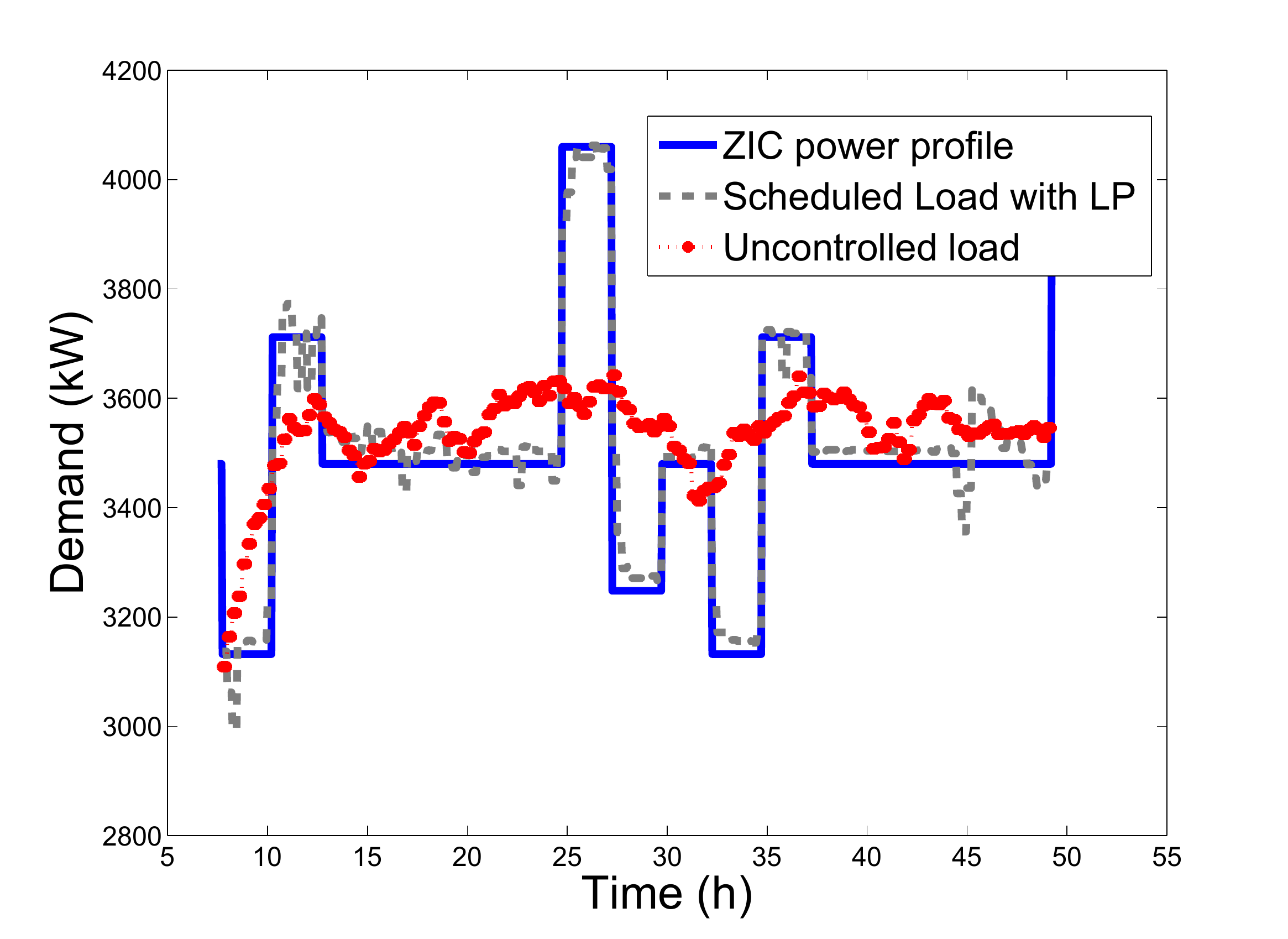}
\caption{\small DDLS allows to follow a certain generation profile}
\label{sim}
\end{figure}

Fig. \ref{sim2} compares the case where instead of a single scheduler unit, several schedulers are distributed across the area where the DDLS program is carried out (20 schedulers in our case, each scheduling around 20 EVs per hour). To showcase the more subtle differences in this comparison, we have scaled down our simulation horizon to 16 hours and decreased the arrival rate by half. The savings that resulted from the distributed schedulers were still considerable (21 percent).
 
\begin{figure}
\centering
\includegraphics[scale=0.34]{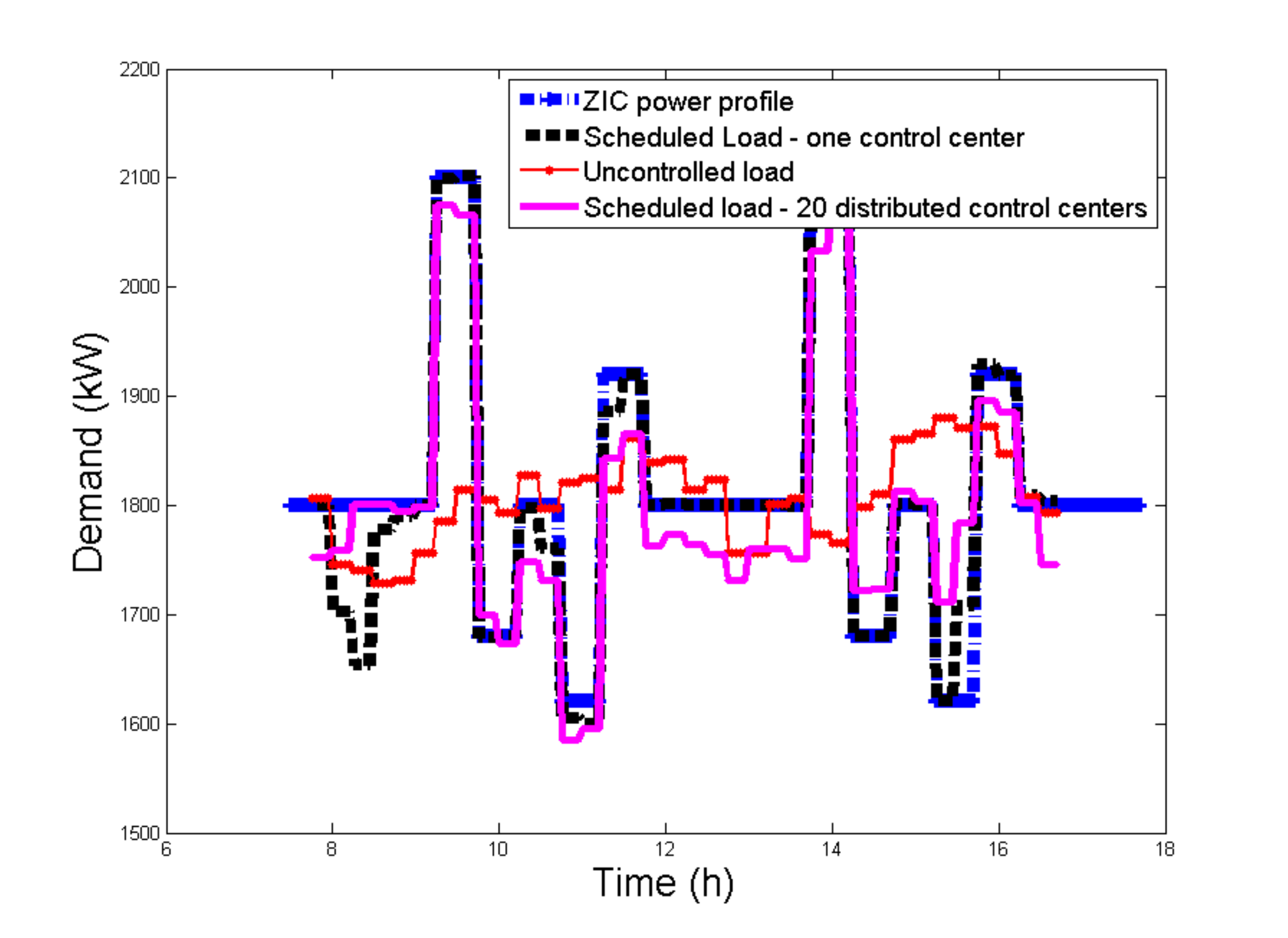}
\caption{\small Higher level of aggregation leads to greater savings}
\label{sim2}
\end{figure}
In Fig. \ref{price}, we compare the load directly controlled by the DDLS scheduler to the case where a single price signal is broadcast to every vehicle, using which they locally minimize their cost for charging. Due to a lack of work that examines how to optimally design the pricing signal and, since the comparison is based on how well the controlled load can follow the day-ahead bid, the price signals were chosen to be a function of excess or shortfall of generation from the uncontrolled load profile (which is close to flat). Individual customers are provided with the price for the whole day. Several price functions were tested but they all led to similar results. Since customers act selfishly to minimize their own costs, this technique results in very high rebound peaks, occurring at times when the price is lower even by a small amount (due to small increases in the ZIC power profile in \eqref{eq.zic}). The conclusion that can be drawn from this result is that implementing pricing techniques, irrespective of how good the individual HEMS work, needs much more research. Also, sending a single price signal to all the customers can endanger the stability of the grid.
\begin{figure}
\centering
\includegraphics[scale=0.36]{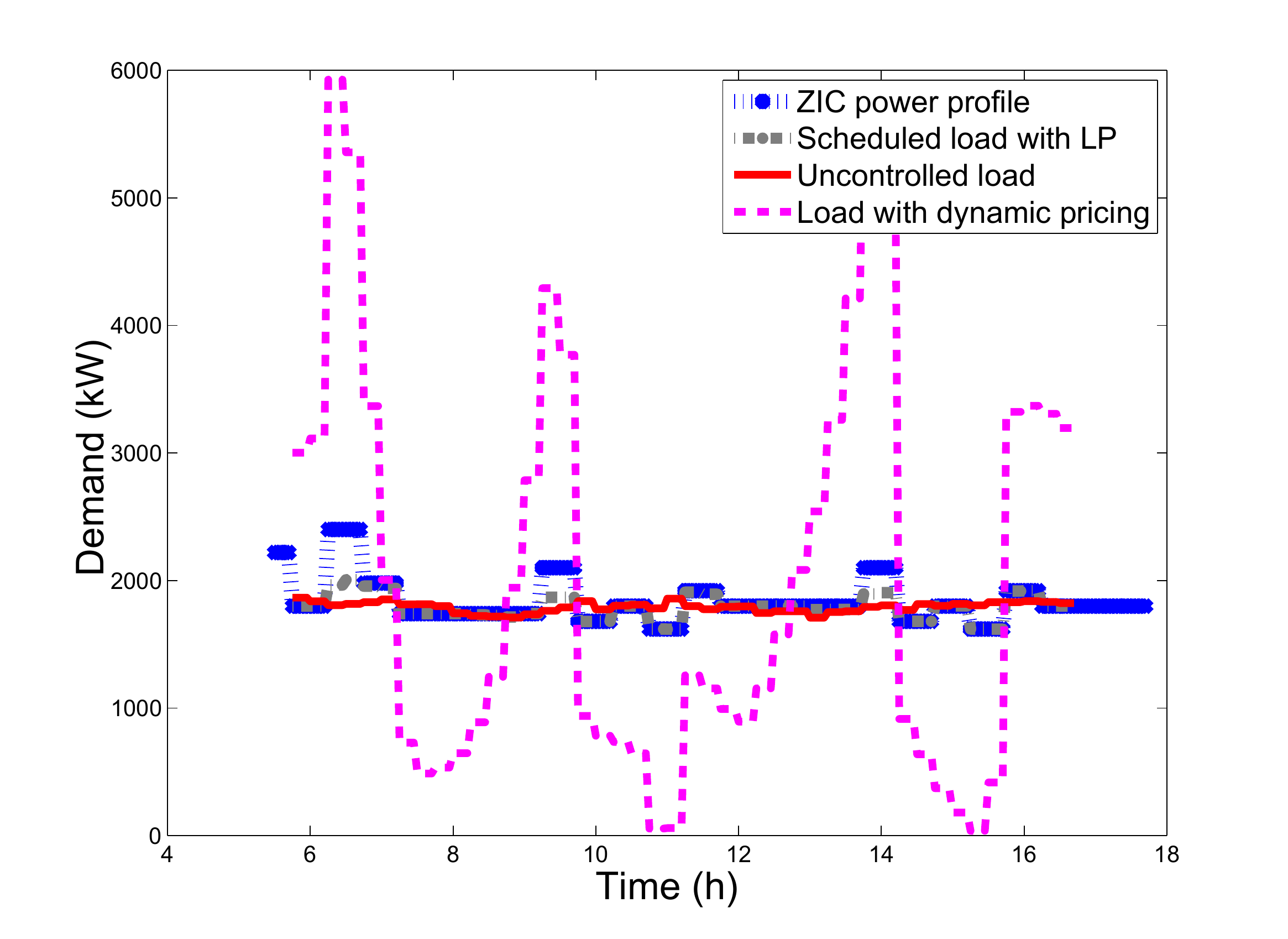}
\caption{\small A single pricing signal for everybody results in rebound peaks}
\label{price}
\end{figure}

\vspace{-0.3cm}
\section{Conclusions and future work}
\vspace{-0.1cm}
In this paper, we introduced a novel DSM model for direct load scheduling. We provided a communication framework to defer the time at which loads turn on, after informing the control center about their arrival in the grid and service request. We described a cellular architecture that we consider suitable to integrate Digital Direct Load Scheduling programs into the grid. Finally, we formulated and solved the problem of managing the energy demand from loads that can be deferred during the real-time operation of the cell. Simulations showed that such a program can alleviate the problem of matching demand and generation considerably with minimal inconvenience to the customers and parsimonious communication needs. In our future work, we will study architectures to incorporate these large reservoirs of storable demand in the electricity market. We will also look more closely at the statistical information that is required to solve the scheduling problems, e.g. the arrival patterns of D-loads, their requests and the availability of renewables. Another major area for future work would be to look at how to integrate the degrees of freedom offered by thermostatically controlled appliances in a neighborhood DSM program (maybe with smaller coverage areas than what was considered in this paper).
\renewcommand{\baselinestretch}{1}

\bibliographystyle{IEEEtran}
\small
\bibliography{New2}

\begin{biography}[{\includegraphics[width=1in,height=1.25in,clip,keepaspectratio]{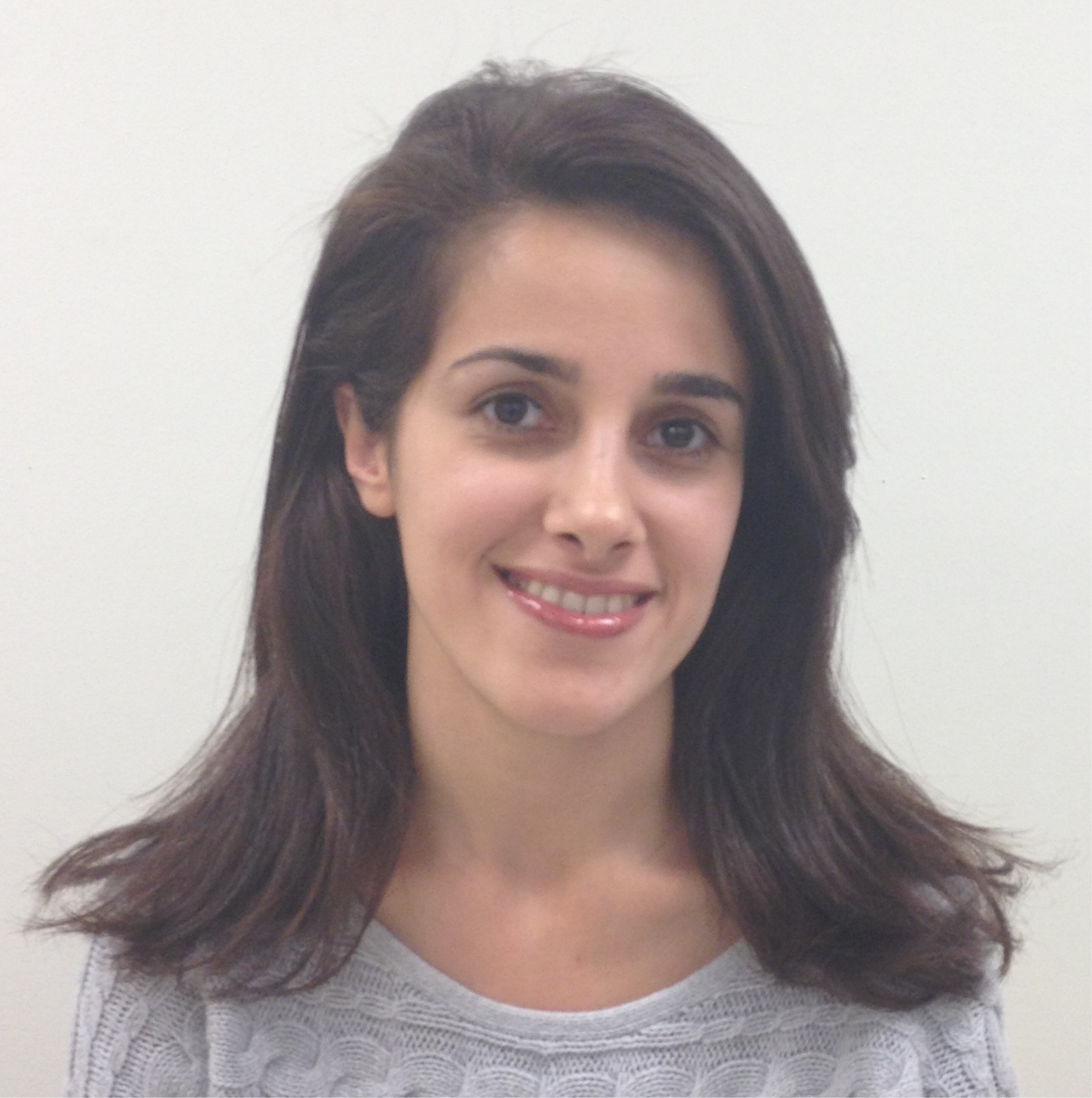}}]{Mahnoosh Alizadeh}(S' 08) has been a PhD student in University of California, Davis since 2009. She received her B.S. degree in Electrical Engineering from Sharif University of Technology. Her research interests mainly lie in the area of Smart Grid, in particular demand side management and demand response.
\end{biography}

\begin{biography}[{\includegraphics[width=1in,height=1.25in,clip,keepaspectratio]{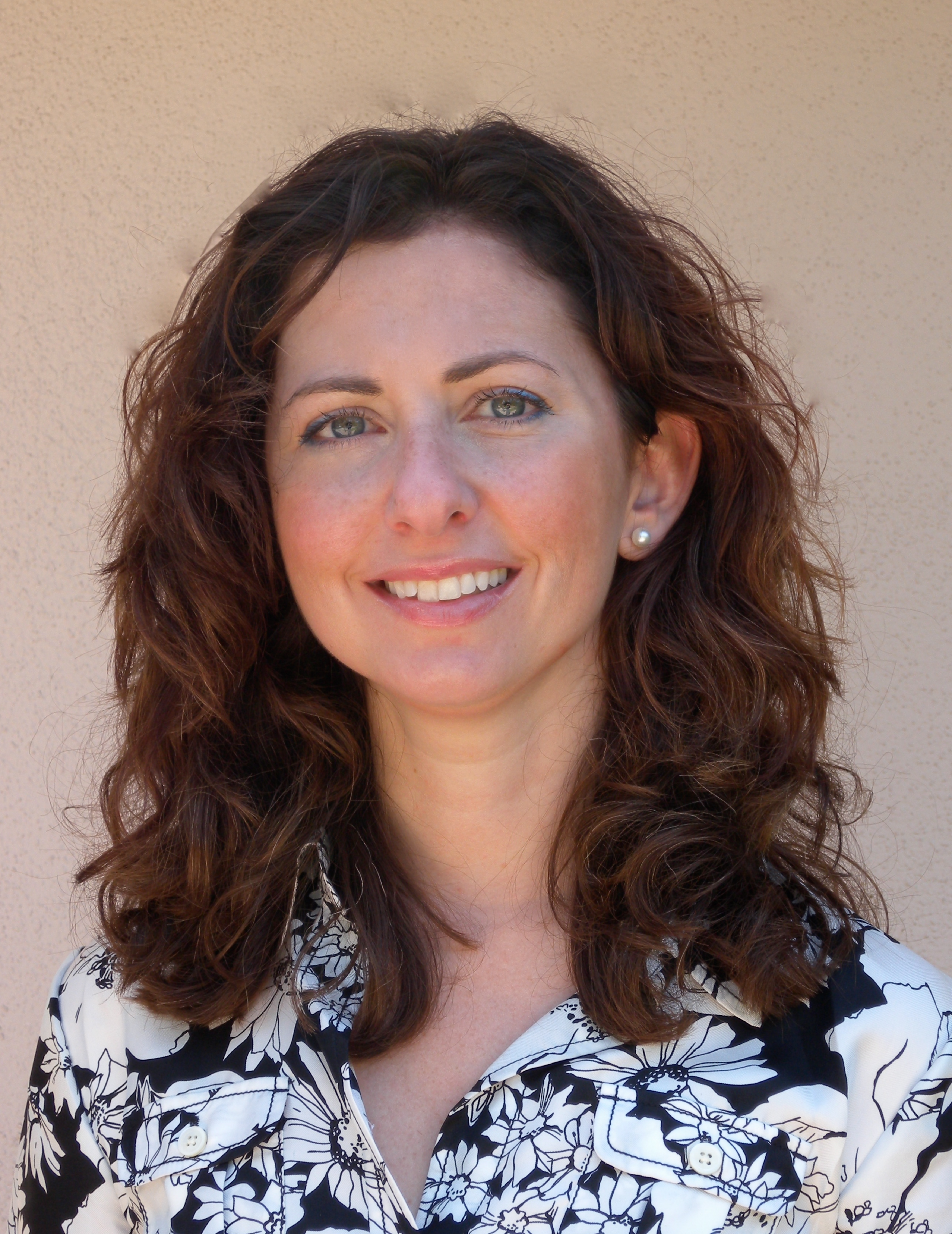}}]{Anna Scaglione} 
(S'97-M'99-SM'09-F'11) received the
Laurea (M.Sc. degree) and the Ph.D. degree from the
University of Rome La Sapienza,� Italy, in 1995 and
1999, respectively.
She was previously at Cornell University from
2001 to 2008, where she became Associate Professor
in 2006; before joining Cornell, she was Assistant
Professor from 2000 to 2001, at the University
of New Mexico. Since July 2008, she has been
Associate Professor in Electrical and Computer
Engineering at the University of California, Davis.
Her expertise is in the broad area of signal processing for communication
systems and networks. Her current research focuses on cooperative wireless
networks and sensors� systems for monitoring and control applications
\end{biography}

\begin{biography}[{\includegraphics[width=1in,height=1.25in,clip,keepaspectratio]{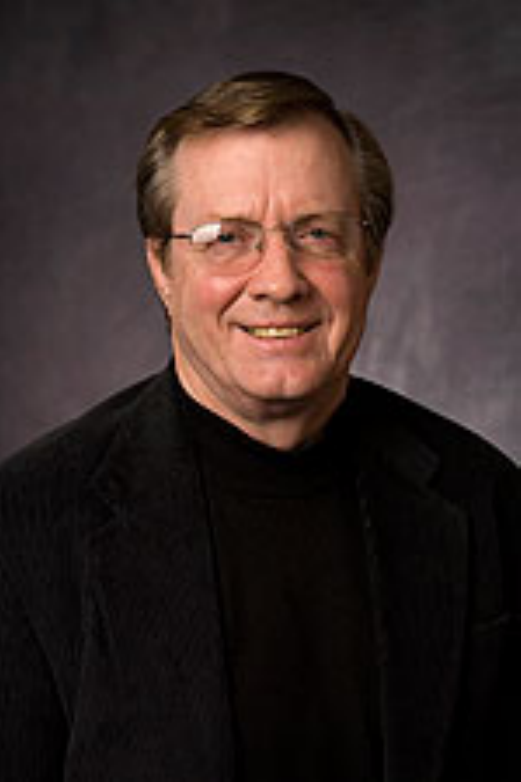}}]{Robert J. Thomas} 
(M'73-SM'83-F'93-LF'08) received the Ph.D. degree in electrical engineering from
Wayne State University, Detroit, MI, in 1973.
He is currently Professor Emeritus of Electrical
and Computer Engineering at Cornell University,
Ithaca, NY. His technical background is broadly
in the areas of systems analysis and control of
large-scale electric power systems. He has published
in the areas of transient control and voltage collapse
problems as well as technical, economic, and institutional impacts of restructuring.
Prof. Thomas is a member of Tau Beta Pi, Eta Kappa Nu, Sigma Xi, and
ASEE. He has received five teaching awards and the IEEE Centennial and Millennium medals. He has been a member of the IEEE-USA Energy Policy Committee since 1991 and was the committee's Chair from 1997-1998. He is the
founding Director of the 13-university-member National Science Foundation
Industry/University Cooperative Research Center, PSERC and he currently serves
as one of 30 inaugural members of the U.S. Department of Energy Secretary's
Electricity Advisory Committee (EAC).
\end{biography}

\end{document}